\newtheorem{theorem}{Theorem}[section]
\newtheorem{proposition}[theorem]{Proposition}
\theoremstyle{definition}
\newtheorem{definition}[theorem]{Definition}
\newtheorem{example}[theorem]{Example}
\theoremstyle{remark}
\newtheorem{remark}[theorem]{Remark}
\numberwithin{equation}{section}
\begin{document}

\title{From quantum quasi-shuffle algebras to braided Rota-Baxter algebras}

%    Information for first author
\author{Run-Qiang Jian}
%    Address of record for the research reported here
\address{School of Computer Science, Dongguan University
of Technology, 1, Daxue Road, Songshan Lake, 523808, Dongguan, P.
R. China}
%    Current address
%\curraddr{}
\email{jian.math@gmail.com}
%    \thanks will become a 1st page footnote.
\thanks{}

%    General info
\subjclass[2010]{Primary 17B37; Secondary 16T25}

\date{}

\dedicatory{To Emma Yin}

\keywords{Quantum quasi-shuffle algebra, Rota-Baxter algebra,
tridendriform algebra, braided Rota-Baxter algebra, quantum
multi-brace algebra}

\begin{abstract}
In this letter, we use quantum quasi-shuffle algebras to construct
Rota-Baxter algebras, as well as tridendriform algebras. We also
propose the notion of braided Rota-Baxter algebras, which is the
relevant object of Rota-Baxter algebras in a braided tensor
category. Examples of such new algebras are provided by using
quantum multi-brace algebras in a category of Yetter-Drinfeld
modules.
\end{abstract}

\maketitle
\section{Introduction}
Rota-Baxter operators originate from a work on probability theory
 more than four decades ago. In the paper \cite{Bax}, G. Baxter deduced many
well-known identities in the theory of fluctuations for random
variables from a simple relation of operators on a commutative
algebra. Later, based on this work and others, G.-C. Rota
introduced the notion of Baxter algebra, which is called
Rota-Baxter algebra nowadays, in his fundamental papers
\cite{Rot}. Since then, this new algebraic object is investigated
by many mathematicians with various motivations. Besides their own
interest in mathematics, Rota-Baxter algebras have many
significant applications in mathematical physics. For instance,
they play essential role in the Hopf algebraic approach to the
Connes-Kreimer theory of renormalization in perturbative quantum
field theory. There the theory of non-commutative Rota-Baxter
algebras with idempotent Rota-Baxter operator is used to provide
an algebraic setting for the formulation of renormalization (cf.
\cite{CK1}, \cite{CK2}, \cite{EGK1}, \cite{EGK2}). Rota-Baxter
algebras are also related to the works of pre-Poisson algebras
\cite{A} and Loday-type algebras \cite{Fa}.

There is a seminal but implicit connection between Rota-Baxter
algebras and another important kind of algebras. This is the
quasi-shuffle algebra. Quasi-shuffle algebras first arose in the
study of the cofree irreducible Hopf algebra built on an arbitrary
associative algebra by K. Newman and D. E. Radford (\cite{NR}). In
2000, they were rediscovered independently by M. E. Hoffman in an
inductive form (\cite{Hof2}). In the same year, L. Guo and W.
Keigher (\cite{Guo1} and \cite{GK}) defined a new algebra
structure called mixable shuffle algebra and used it to construct
free commutative Rota-Baxter algebras. Nevertheless, the mixable
shuffle algebra is equivalent to the quasi-shuffle algebra in some
sense. Recently, quasi-shuffle algebras have many important
applications in other areas of mathematics, such as multiple zeta
values (\cite{Hof1} and \cite{IKZ}), Rota-Baxter algebras
(\cite{EG}), and commutative tridendriform algebras (\cite{Lod}).
They also appear in mathematical physics. For instance, D. Kreimer
used quasi-shuffle algebras to study shuffle identities between
Feynman graphs (\cite{K}).

On the other hand, after the birth of quantum groups,
mathematicians and mathematical physicists begin to be interested
in braided tensor categories and the quantization of classical
algebraic structures. These braided or quantized objects not only
provide new subjects but also bring deeper interpretation of the
classical ones. They also provide new examples and tools in the
research of noncommutative geometry and conformal field theory
(cf. \cite{BK}). For the importance of quasi-shuffle algebras and
as a subsequent work of his quantum shuffles, M. Rosso constructed
the quantum quasi-shuffle algebra which is the quantization of
quasi-shuffle algebras. Some interesting properties and
applications of quantum quasi-shuffle algebras have been
discovered during the last few years (cf. \cite{JRZ} and
\cite{FR}). To our surprise, by applying a similar construction of
Guo and Keigher (\cite{GK}) to quantum quasi-shuffle algebras, we
can still provide Rota-Baxter algebras and tridendriform algebras.
This phenomenon does not happen usually since the complicated
action of the braiding makes quantum objects behave quite
differently from the usual ones. Based on this construction, we
can also construct idempotent Rota-Baxter operators. This
discovery extremely enlarges the families of Rota-Baxter algebras
and tridendriform algebras. Meanwhile, we consider the relevant
object of Rota-Baxter algebras in a braided tensor category. In
order to define such an object, we have to impose compatibility
between the braiding and the multiplication, as well as the
Rota-Baxter operator. Examples of these new algebras are
constructed from quantum multi-brace algebras in a category of
Yetter-Drinfeld modules.

This paper is organized as follows. In Section 2, we construct
Rota-Baxter algebras and tridendriform algebras by using quantum
quasi-shuffle algebras. In Section 3, we define the notion of
braided Rota-Baxter algebra and provide examples from quantum
multi-brace algebras in a category of Yetter-Drinfeld modules.

\noindent\textbf{Notation.} In this paper, we denote by
$\mathbb{K}$ a ground field of characteristic 0. All the objects
we discuss are defined over $\mathbb{K}$. For a vector space $V$,
we denote by $T(V)$ the tensor algebra of $V$, by $\otimes$ the
tensor product within $T(V)$, and by $\underline{\otimes}$ the one
between $T(V)$ and $T(V)$.

We denote by $\mathfrak{S}_{n}$ the symmetric group acting on the
set $\{1,2,\ldots,n\}$ and by $s_{i}$, $1\leq i\leq n-1$, the
standard generators of $\mathfrak{S}_{n}$ permuting $i$ and $i+1$.

A braiding $\sigma$ on a vector space $V$ is an invertible linear
map in $\mathrm{End}(V\otimes V)$ satisfying the quantum
Yang-Baxter equation on $V^{\otimes 3}$: $$(\sigma\otimes
\mathrm{id}_{V})(\mathrm{id}_{V}\otimes \sigma)(\sigma\otimes
\mathrm{id}_{V})=(\mathrm{id}_{V}\otimes \sigma)(\sigma\otimes
\mathrm{id}_{V})(\mathrm{id}_{V}\otimes \sigma).$$ A braided
vector space $(V,\sigma)$ is a vector space $V$ equipped with a
braiding $\sigma$. For any $n\in \mathbb{N}$ and $1\leq i\leq
n-1$, we denote by $\sigma_i$ the operator $\mathrm{id}_V^{\otimes
i-1}\otimes \sigma\otimes \mathrm{id}_V^{\otimes n-i-1}\in
\mathrm{End}(V^{\otimes n})$. For any $w\in \mathfrak{S}_{n}$, we
denote by $T^\sigma_w$ the corresponding lift of $w$ in the braid
group $B_n$, defined as follows: if $w=s_{i_1}\cdots s_{i_l}$ is
any reduced expression of $w$, then $T^\sigma_w=\sigma_{i_1}\cdots
\sigma_{i_l}$. This definition is well-defined (see, e.g., Theorem
4.12 in \cite{KT}).

We define $\beta:T(V)\underline{\otimes} T(V)\rightarrow
T(V)\underline{\otimes} T(V)$ by requiring that the restriction of
$\beta$ on $V^{\otimes i}\underline{\otimes} V^{\otimes j}$,
denoted by $\beta_{ij}$, is $T^\sigma_{\chi_{ij}}$ , where
\[\chi_{ij}=\left(\begin{array}{cccccccc}
1&2&\cdots&i&i+1&i+2&\cdots & i+j\\
j+1&j+2&\cdots&j+i&1& 2 &\cdots & j
\end{array}\right)\in \mathfrak{S}_{i+j},\] for any $i,j\geq 1$. For convenience, we
interpret $\beta_{0i}$ and $\beta_{i0}$ as the identity map of
$V^{\otimes i}$.

Let $(C,\Delta,\varepsilon)$ be a coalgebra. We denote
$\Delta^{(0)}=\mathrm{id}_C$, $\Delta^{(1)}=\Delta$, and
$\Delta^{(n+1)}=(\Delta^{(n)}\otimes \mathrm{id}_C)\circ\Delta$
recursively for $n\geq 1$. We adopt Sweedler's notation for
coalgebras and comodules: for any $c\in C$,
$$\Delta(c)=\sum c_{(1)}\otimes  c_{(2)},$$for a left $C$-comodule $(M,\rho)$ and any $ m\in M$,
$$\rho(m)=\sum m_{(-1)}\otimes  m_{(0)}.$$

\section{Rota-Baxter algebras coming from quantum quasi-shuffle algebras}

We start by recalling some basic notions. In this letter, by an
algebra we always mean an associative $\mathbb{K}$-algebra which
is not necessarily unital.

\begin{definition}Let $\lambda$ be an element in $\mathbb{K}$. A pair $(R,P)$ is
called a \emph{Rota-Baxter algebra of weight $\lambda$} if $R$ is
an algebra and $P$ is a linear endomorphism of $R$ satisfying that
for any $x,y\in R$,
$$P(x)P(y)=P(xP(y))+P(P(x)y)+\lambda P(xy).$$The map $P$ is called a \emph{Rota-Baxter operator}.\end{definition}

For a systematic and detailed introduction of these algebras, we
refer the readers to the book \cite{Guo2}.

By the works \cite{Guo1} and \cite{GK}, Rota-Baxter algebras are
closely related to mixable shuffle algebras. More precisely,
mixable shuffle algebras provide the free object in the category
of commutative Rota-Baxter algebras. Now we generalize this
construction to the quantized version of mixable shuffle algebras.
Our framework is the braided category. We need to work with the
relevant object of associative algebras in braided categories.

\begin{definition}Let $A=(A,m)$ be an algebra with product $m$, and $\sigma$ be a braiding on $A$. We call the triple $(A,m,\sigma)$ a \emph{braided algebra} if it satisfies the following conditions:
\[
\begin{split}
(\mathrm{id}_A\otimes m)\sigma_1\sigma_2&=\sigma( m\otimes
\mathrm{id}_A),\\[3pt] ( m\otimes
\mathrm{id}_A)\sigma_2\sigma_1&=\sigma(\mathrm{id}_A\otimes m).
\end{split}
\]
Moreover, if $A$ is unital and its unit $1_A$ satisfies that for
any $a\in A$,
\[
\begin{split}
\sigma(a\otimes 1_A)&=1_A\otimes a,\\[3pt] \sigma(1_A\otimes a)&=a\otimes
1_A,
\end{split}
\]then $A$ is called a \emph{unital braided algebra}.
\end{definition}

\begin{remark}1. Let $\tau$ be the usual flip map which switches two arguments $a\otimes b\mapsto b\otimes a$. Obviously, it is a braiding. For any algebra $(A,m)$, it is evident that the first two identities in the definition of braided algebras hold automatically for $m$ and $\tau$. Hence any associative algebra is a braided algebra with respect to $\tau$. On the other hand, for any braided vector space $(V,\sigma)$, there is a trivial
braided algebra structure on it whose multiplication is the
trivial one $m=0$.

2. The braided algebra structure is very crucial. Given a braided
vector space $(V,\sigma)$, it is not reasonable that there should
be a non-trivial braided algebra structure on $V$. For instance,
let $V$ be a vector space with basis $\{e_1,e_2\}$. Consider the
following two braidings on $V$:\[
\begin{split}
\sigma(e_{1}\otimes e_{1})&=e_{1}\otimes e_{1}, \\[3pt]
\sigma(e_{i}\otimes e_{2})&=qe_{2}\otimes e_{1},\\[3pt]
\sigma(e_{2}\otimes e_{1})&=qe_{1}\otimes
e_{2}+(1-q^{2})e_{2}\otimes e_{1},\\[3pt]
\sigma(e_{2}\otimes e_{2})&=e_{2}\otimes e_{2},
\end{split}
\] and $$\sigma'(e_{i}\otimes e_{j})=qe_{j}\otimes e_{i},\ \ \ \forall i,j,$$
where $q\in\mathbb{K}$ is nonzero and not equal to $\pm 1$.

We mention that $\sigma$ is of Hecke type. It comes originally
from the R-matrix of the quantum enveloping algebra $U_q
\mathfrak{sl}_2$. Both of these braidings play important roles in
the theory of quantum groups. But by an easy argument on the
structure coefficients of the multiplication, one can show that
the only product on $V$ which is compatible with $\sigma$ is just
the trivial one. The case is the same for $\sigma'$.

Since $\sigma$ is of Hecke type, we can ask the following
interesting question: Does there exist a non-trivial braided
algebra whose braiding is of Hecke type?

3. For any braided algebra $(A,m,\sigma)$, one can embed it into a
unital braided algebra
$(\widetilde{A},\widetilde{m},\widetilde{\sigma})$ in the
following way. First of all, we set
$\widetilde{A}=\mathbb{K}\oplus A$. Then we define the
multiplication $\widetilde{m}$ and the braiding
$\widetilde{\sigma}$ by: for any $\lambda,\mu\in \mathbb{K}$ and
$a,b\in A$
$$\widetilde{m}\big((\lambda+a)\otimes (\mu+b)\big)=\lambda\mu+\lambda\cdot b+\mu\cdot
a+m(a\otimes b), $$ and
$$\widetilde{\sigma}\big((\lambda+a)\otimes (\mu+b)\big)=\mu\otimes
\lambda+b\otimes \lambda+\mu\otimes a+\sigma(a\otimes b).
$$It is easy to verify that $(\widetilde{A},\widetilde{m},\widetilde{\sigma})$ is a braided algebra with unit $1\in \mathbb{K}$.
\end{remark}

Given a braided algebra $(A, m,\sigma)$, we define a map
$\Join_\sigma: T(A)\underline{\otimes} T(A)\rightarrow T(A)$ as
follows. For any $\lambda\in \mathbb{K}$ and $x\in T(A)$,
$$\lambda\Join_\sigma x=x \Join_\sigma \lambda=\lambda \cdot x.$$

For $i,j\geq 1$ and any $a_1,\ldots, a_i,b_1,\ldots, b_j\in A$,
$\Join_\sigma$ is defined recursively by
$$a_1\Join_\sigma b_1= a_1\otimes b_1+\sigma(a_1\otimes
b_1)+m(a_1\otimes b_1),$$
\begin{eqnarray*}\lefteqn{ a_1\Join_\sigma  (b_1\otimes
\cdots\otimes
b_j)}\\[3pt]
&=&a_1\otimes b_1\otimes \cdots\otimes  b_{j}+(\mathrm{id}_A\otimes\Join_{\sigma  (1,j-1)})(\beta_{1,1}\otimes\mathrm{id}_A^{\otimes  j-1})(a_1\otimes b_1\otimes \cdots\otimes  b_j)\\[3pt]
&&+m(a_1\otimes  b_1)\otimes b_2\otimes \cdots\otimes  b_j,
\end{eqnarray*}
\begin{eqnarray*}
\lefteqn{(a_1\otimes \cdots\otimes  a_i)\Join_\sigma  b_1}\\[3pt]
&=&a_1\otimes  \big((a_2\otimes \cdots\otimes  a_i)\Join_\sigma  b_1\big)+\beta_{i,1}(a_1\otimes \cdots\otimes  a_i\otimes  b_1)\\[3pt]
&&+(m\otimes\mathrm{id}_A^{\otimes  i-1})(\mathrm{id}_A\otimes
\beta_{i-1,1})(a_1\otimes \cdots\otimes a_i\otimes b_1),
\end{eqnarray*}
and\begin{eqnarray*}
\lefteqn{(a_1\otimes \cdots\otimes  a_i)\Join_\sigma  (b_1\otimes \cdots\otimes  b_j)}\\[3pt]
&=&a_1\otimes  \big((a_2\otimes \cdots\otimes  a_i)\Join_\sigma (b_1\otimes \cdots\otimes  b_{j})\big)\\[3pt]
&&+(\mathrm{id}_A\otimes  \Join_{\sigma  (i,j-1)})(\beta_{i,1}\otimes \mathrm{id}_A^{\otimes  j-1})(a_1\otimes \cdots\otimes  a_i\otimes  b_1\otimes \cdots\otimes  b_j)\\[3pt]
&&+(m\otimes \Join_{\sigma  (i-1,j-1)} )(\mathrm{id}_A\otimes
\beta_{i-1,1}\otimes \mathrm{id}_A^{\otimes  j-1})(a_1\otimes
\cdots\otimes a_i\otimes b_1\otimes \cdots\otimes  b_j),
\end{eqnarray*}
where $\Join_{\sigma (i,j)}$ denotes the restriction of
$\Join_\sigma $ on $A^{\otimes  i}\underline{\otimes} A^{\otimes
j}$.

Then $T_{\sigma}^{qsh}(A)=(T(A),\Join_\sigma)$ is an associative
algebra with unit $1\in \mathbb{K}$, and called the \emph{quantum
quasi-shuffle algebra} built on $(A, m,\sigma)$. It is the
quantization of the usual quasi-shuffle algebra. We observe that
$T_{\sigma}^{qsh}(A)$ is a filtered algebra with the filtration
$T_{\sigma}^{qsh}(A)^n=\bigoplus_{i=0}^n A^{\otimes i}$ (see
Proppsition 4.22 in \cite{JR}). For more information about quantum
quasi-shuffle algebras, one can see \cite{JRZ}.

Let $(A, m,1_A,\sigma)$ be a unital braided algebra. Then
manifestly $(A, \lambda\cdot m,\sigma)$ is a braided algebra for
any $\lambda\in \mathbb{K}$. We denote by $\Join_{\sigma,\lambda}$
the quantum quasi-shuffle product with respect to $(A,
\lambda\cdot m,\sigma)$. By Lemma 3 in \cite{Ba}, the space
$A\underline{\otimes} T_{\sigma}^{qsh}(A)$ is a unital associative
algebra with the product
$$\lozenge_{\sigma,\lambda}=(m\otimes
\Join_{\sigma,\lambda})(\mathrm{id}_A\otimes \beta \otimes
\mathrm{id}_{T(A)}).$$ We denote by
$\mathcal{R}_{\sigma,\lambda}(A)$ the pair $(A\underline{\otimes}
T_{\sigma}^{qsh}(A),\lozenge_{\sigma,\lambda})$. We can view
$\mathcal{R}_{\sigma,\lambda}(A)$ as $T^+(A)=\bigoplus_{i\geq
1}A^{\otimes i}$ at the level of vector spaces. We have two
products $\Join_{\sigma,\lambda}$ and $\lozenge_{\sigma,\lambda}$
on $T^+(A)$. This is an example of 2-braided algebras which
produce quantum multi-brace algebras (for the definitions, see
\cite{JR}). We define an endomorphism $P
:\mathcal{R}_{\sigma,\lambda}(A)\rightarrow\mathcal{R}_{\sigma,\lambda}(A)$
by
\[\begin{split}P (a_0\underline{\otimes}u)&=1_A\underline{\otimes}a_0\otimes u,\ \mathrm{if}\ u\in T^+(A),\\[3pt]
P (a_0\underline{\otimes}\nu)&=1_A\underline{\otimes}\nu\cdot
a_0,\ \mathrm{if}\ \nu \in \mathbb{K} .\end{split}\]

\begin{theorem}Under the assumptions above, the pair $(\mathcal{R}_{\sigma,\lambda}(A), P)$ is a Rota-Baxter algebra of weight $\lambda$.\end{theorem}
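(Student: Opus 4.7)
The plan is to verify the Rota--Baxter identity
\[
P(x)P(y)=P(xP(y))+P(P(x)y)+\lambda P(xy)
\]
(with the product understood as $\lozenge_{\sigma,\lambda}$) on pure tensors $x=a_0\underline{\otimes}u$ and $y=b_0\underline{\otimes}v$ with $a_0,b_0\in A$ and $u,v\in T(A)$ homogeneous, and then extend by linearity. The crucial preliminary observation is that, because $(A,m,1_A,\sigma)$ is a \emph{unital} braided algebra, the relations $\sigma(a\otimes 1_A)=1_A\otimes a$ and $\sigma(1_A\otimes a)=a\otimes 1_A$ promote, by induction on $|w|$ along a reduced expression for $\chi_{|w|,1}$, to
\[
\beta(w\otimes 1_A)=1_A\otimes w,\qquad\beta(1_A\otimes w)=w\otimes 1_A
\]
for every $w\in T(A)$. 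In other words, the braid $\beta$ transports $1_A$ trivially across any string, and this is the decisive fact for the whole argument.

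Using this together with $m(1_A\otimes a)=a=m(a\otimes 1_A)$, I would first collapse $P(x)\lozenge_{\sigma,\lambda}P(y)$: the $\beta$ appearing in the definition of $\lozenge_{\sigma,\lambda}$ slides the leading $1_A$ of $P(y)$ to the left past $(a_0\otimes u)$, where it meets the leading $1_A$ of $P(x)$ and is absorbed by $m$. The outcome is
\[
P(x)\lozenge_{\sigma,\lambda}P(y)=1_A\underline{\otimes}\bigl((a_0\otimes u)\Join_{\sigma,\lambda}(b_0\otimes v)\bigr).
\]
Applying the recursive definition of $\Join_{\sigma,\lambda}$ on the right-hand side produces three summands, corresponding to whether the first letter of the output is $a_0$ (kept from the left factor), $b_0$ (moved to the front by $\beta$), or the merged letter $m(a_0\otimes b_0)$ (with an extra factor $\lambda$).

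The remaining work is to identify these three summands, each prepended with $1_A\underline{\otimes}$, as $P(xP(y))$, $P(P(x)y)$, and $\lambda P(xy)$ respectively. For the $a_0$-summand, the same $1_A$-simplification of $\beta$ reduces $x\lozenge_{\sigma,\lambda}P(y)$ to $a_0\underline{\otimes}\bigl(u\Join_{\sigma,\lambda}(b_0\otimes v)\bigr)$, and applying $P$ yields exactly the first summand prepended with $1_A$. For the $\lambda$-summand, expanding $\lozenge_{\sigma,\lambda}$ on $x$ and $y$ gives precisely the merged tensor, and $P$ then prepends $1_A$.

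I expect the main obstacle to lie in the $b_0$-summand, which has to be matched with $P(P(x)\lozenge_{\sigma,\lambda}y)$. Both expressions contain a braid $\beta_{|u|+1,1}$ acting on the long string $a_0\otimes u\otimes b_0$, but they arise in quite different ways: one from the recursion of $\Join_{\sigma,\lambda}$, the other from the definition of $\lozenge_{\sigma,\lambda}$ on $P(x)$ and $y$ after the leading $1_A$ of $P(x)$ is swallowed by $m(1_A\otimes\cdot)$. The unital braiding axioms are what make the two $\beta$-expressions coincide, by allowing that leading $1_A$ to slide unobstructed through the entire braid. Finally, the degenerate cases in which $u$ or $v$ lies in $\mathbb{K}$ are handled using the base clauses of the $\Join_{\sigma,\lambda}$-recursion, and the identification above goes through verbatim.
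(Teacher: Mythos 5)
Your proposal is correct and follows essentially the same route as the paper's proof: the key observation $\beta(u\underline{\otimes}1_A)=1_A\underline{\otimes}u$, the collapse of $P(x)\lozenge_{\sigma,\lambda}P(y)$ to $1_A\underline{\otimes}\bigl((a_0\otimes u)\Join_{\sigma,\lambda}(b_0\otimes v)\bigr)$, and the matching of the three summands of the $\Join_{\sigma,\lambda}$-recursion with $P(xP(y))$, $P(P(x)y)$ and $\lambda P(xy)$ --- the paper merely runs the same computation in the opposite direction, evaluating the three right-hand terms first and recognizing their sum as the recursion. One small correction: the $b_0$-summand you single out as the main obstacle is actually immediate, since the braid $\beta_{|u|+1,1}$ produced by the recursion and the $\beta$ in the definition of $\lozenge_{\sigma,\lambda}$ applied to $P(x)$ and $y$ are literally the same operator on $a_0\otimes u\otimes b_0$, and that step uses only the unit axiom $m(1_A\otimes\cdot)=\mathrm{id}$ rather than the unital braiding axioms (which are needed instead for the terms where $P(y)$ appears inside the product).
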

\begin{proof}Observe that $\beta(u\underline{\otimes}1_A)=1_A\underline{\otimes}u$ for any $u\in T(A)$. Therefore for any $a, b\in A$ and $x\in A^{\otimes i}$,$ y\in A^{\otimes j}$, we have
\begin{eqnarray*}\lefteqn{P\big((a\underline{\otimes}x)\lozenge_{\sigma,\lambda}P(b\underline{\otimes}y)\big)}\\[3pt]
&=&P\big((a\underline{\otimes}x)\lozenge_{\sigma,\lambda}(1_A\underline{\otimes}b\otimes
y)\big)=P\Big(a\underline{\otimes}\big(x\Join_{\sigma,\lambda}(b\otimes
y)\big)\Big)\\[3pt]
&=&1_A\underline{\otimes}a\otimes
\big(x\Join_{\sigma,\lambda}(b\otimes y)\big),\end{eqnarray*}
\begin{eqnarray*}\lefteqn{P\big(P(a\underline{\otimes}x)\lozenge_{\sigma,\lambda}(b\underline{\otimes}y)\big)}\\[3pt]
&=&P\big((1_A\underline{\otimes}a\otimes  x)\lozenge_{\sigma,\lambda}(b\underline{\otimes}y)\big)\\[3pt]
&=&1_A\underline{\otimes}\big((\mathrm{id}_A\otimes
\Join_{\sigma,\lambda,(i+1,j)})(\beta_{i+1,1}\otimes
\mathrm{id}_A^{\otimes j})(a\otimes  x\otimes b\otimes
y)\big),\end{eqnarray*}and
\begin{eqnarray*}\lefteqn{\lambda P\big((a\underline{\otimes}x)\lozenge_{\sigma,\lambda}(b\underline{\otimes}y)\big)}\\[3pt]
&=& 1_A\underline{\otimes}\big((\lambda\cdot m\otimes
\Join_{\sigma,\lambda})(\mathrm{id}_A\otimes \beta_{i1} \otimes
\mathrm{id}_A^{\otimes j})(a\otimes x\otimes b\otimes y)\big)
.\end{eqnarray*} By taking a summation, we get
\begin{eqnarray*}\lefteqn{P\big((a\underline{\otimes}x)\lozenge_{\sigma,\lambda}P(b\underline{\otimes}y)\big)+P\big(P(a\underline{\otimes}x)\lozenge_{\sigma,\lambda}(b\underline{\otimes}y)\big)+\lambda P\big((a\underline{\otimes}x)\lozenge_{\sigma,\lambda}(b\underline{\otimes}y)\big)}\\[3pt]
&=&1_A\underline{\otimes}\Big(a\otimes
\big(x\Join_{\sigma,\lambda}(b\otimes
y)\big)\\[3pt]
&&\ \ \ \ \ \ \ \ \ +(\mathrm{id}_A\otimes
\Join_{\sigma,\lambda,(i+1,j)})(\beta_{i+1,1}\otimes
\mathrm{id}_A^{\otimes j})(a\otimes  x\otimes b\otimes
y)\\[3pt]
&&\ \ \ \ \ \ \ \ \ +(\lambda\cdot m\otimes
\Join_{\sigma,\lambda})(\mathrm{id}_A\otimes \beta_{i1} \otimes
\mathrm{id}_A^{\otimes j})(a\otimes x\otimes b\otimes y)\Big)\\[3pt]
&=&1_A\underline{\otimes}\big((a\otimes x)\Join_{\sigma,\lambda}(b\otimes y)\big)\\[3pt]
&=&P(a\underline{\otimes}x)\lozenge_{\sigma,\lambda}P(b\underline{\otimes}y).\end{eqnarray*}\end{proof}

Here we mention that if $\sigma$ is the usual flip map, the
resulting Rota-Baxter algebra $(\mathcal{R}_{\sigma,\lambda}(A),
P)$ in the above theorem is the free commutative Rota-Baxter
algebra constructed in \cite{GK} and \cite{Guo1}. Hence this
result provides concrete examples of Rota-Baxter algebras in a
much bigger framework.

Besides the above construction, we can also provide idempotent
Rota-Baxter operators on $\mathcal{R}_{\sigma,\lambda}(A)$ even
$A$ is not unital. In order to construct such operators, we need
the following proposition. For a proof, one can see Theorem 1.1.13
in \cite{Guo2} or verify it directly.

\begin{proposition}Suppose $R$ is an algebra and $R_1$ and $R_2$ are two subalgebras of $R$ such that $R=R_1\oplus R_2$ as vector space. Then the projection $P$ from $R$ onto $R_1$ is an idempotent Rota-Baxter operator of weight -1. \end{proposition}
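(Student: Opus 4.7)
The plan is to verify the two properties directly from the hypothesis $R = R_1 \oplus R_2$ as a vector space together with the fact that $R_1$ and $R_2$ are subalgebras. Idempotence is immediate: any $x \in R$ admits a unique decomposition $x = x_1 + x_2$ with $x_i \in R_i$, and $P(x) = x_1$, so $P^2(x) = P(x_1) = x_1 = P(x)$.

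For the Rota-Baxter relation of weight $-1$, that is,
\[
P(x)P(y) = P(xP(y)) + P(P(x)y) - P(xy),
\]
I would take arbitrary $x, y \in R$ and write $x = x_1 + x_2$, $y = y_1 + y_2$ with $x_i, y_i \in R_i$. The left-hand side is $x_1 y_1$, which lies in $R_1$ because $R_1$ is a subalgebra, so applying $P$ to it acts as the identity. On the right-hand side I would expand
\[
x P(y) = x_1 y_1 + x_2 y_1, \qquad P(x) y = x_1 y_1 + x_1 y_2, \qquad x y = x_1 y_1 + x_1 y_2 + x_2 y_1 + x_2 y_2,
\]
so the combination $x P(y) + P(x) y - x y$ simplifies to $x_1 y_1 - x_2 y_2$. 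Since $x_2 y_2 \in R_2$ (as $R_2$ is a subalgebra) and $x_1 y_1 \in R_1$, applying $P$ kills the $R_2$-term and fixes the $R_1$-term, yielding $x_1 y_1$, matching the left-hand side.

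There is no real obstacle here: the whole statement is a direct bookkeeping check that exploits the two hypotheses ($R_i$ closed under multiplication and $R = R_1 \oplus R_2$). The only thing worth flagging is that the sign in the weight $-1$ identity corresponds precisely to subtracting off the cross-cancellations between $R_1$ and $R_2$, which is why $-1$ (and not another value of $\lambda$) is the natural weight for a splitting projector. No compatibility between $R_1$ and $R_2$ beyond the direct sum decomposition is needed, since the mixed products $x_1 y_2$ and $x_2 y_1$ appear symmetrically in $xP(y)$ and $P(x)y$ and cancel against $xy$ without having to be projected.
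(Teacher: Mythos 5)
Your proof is correct: decomposing $x=x_1+x_2$, $y=y_1+y_2$, noting the cross terms cancel in $xP(y)+P(x)y-xy$ before $P$ is ever applied, and then using that $R_1$, $R_2$ are subalgebras is exactly the direct verification required. The paper itself offers no argument beyond citing Theorem 1.1.13 of Guo's book ``or verify it directly,'' and your computation is precisely that direct check, so the approaches coincide.
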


Note that $\mathcal{R}_{\sigma,\lambda}(A)=A\underline{\otimes
}T(A)=(A\underline{\otimes }\mathbb{K})\bigoplus(\bigoplus_{i\geq
1}A\underline{\otimes}A^{\otimes i})$ as a vector space. Since
$\beta_{01}$ is the identity map and $T_{\sigma}^{qsh}(A)$ is a
filtered algebra, both of $A=A\underline{\otimes }\mathbb{K}$ and
$\bigoplus_{i\geq 1}A\underline{\otimes}A^{\otimes i}$ are
subalgebras of $\mathcal{R}_{\sigma,\lambda}(A)$. Therefore we
have immediately the following.

\begin{theorem}Suppose $(A, m,\sigma)$ is a braided algebra. Then the projection $P_1$ from $\mathcal{R}_{\sigma,\lambda}(A)$ onto its subalgebra $A$ is an idempotent Rota-Baxter operator of weight -1. So is the projection $P_2$ from $\mathcal{R}_{\sigma,\lambda}(A)$ onto its subalgebra $\bigoplus_{i\geq
1}A\underline{\otimes}A^{\otimes i}$. \end{theorem}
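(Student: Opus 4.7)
The plan is to recognize the claim as a direct application of the preceding Proposition: if an algebra $R$ decomposes as a direct sum of two subalgebras $R_1\oplus R_2$, then the projection onto $R_1$ is an idempotent Rota-Baxter operator of weight $-1$, and by symmetry the projection onto $R_2$ enjoys the same property. I will use the decomposition
\[
\mathcal{R}_{\sigma,\lambda}(A)=(A\underline{\otimes}\mathbb{K})\oplus\Bigl(\bigoplus_{i\geq 1}A\underline{\otimes}A^{\otimes i}\Bigr),
\]
which is a vector-space direct sum because $T(A)=\mathbb{K}\oplus\bigoplus_{i\geq 1}A^{\otimes i}$. The task then reduces to verifying that each summand is closed under the product $\lozenge_{\sigma,\lambda}=(m\otimes\Join_{\sigma,\lambda})(\mathrm{id}_A\otimes\beta\otimes\mathrm{id}_{T(A)})$.

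For $A\underline{\otimes}\mathbb{K}$, I would take $a\underline{\otimes}\mu$ and $b\underline{\otimes}\nu$ with $\mu,\nu\in\mathbb{K}$ and compute directly: the convention that $\beta_{01}$ is the identity, combined with the scalar rule $\mu\Join_{\sigma,\lambda}\nu=\mu\nu$, yields $\mu\nu\cdot m(a\otimes b)\underline{\otimes}1\in A\underline{\otimes}\mathbb{K}$. For $\bigoplus_{i\geq 1}A\underline{\otimes}A^{\otimes i}$, I would take $a\underline{\otimes}x$ and $b\underline{\otimes}y$ with $x\in A^{\otimes i}$, $y\in A^{\otimes j}$, $i,j\geq 1$. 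The braiding $\beta_{i1}$ preserves tensor degree, sending $x\underline{\otimes}b$ into $A^{\otimes(i+1)}$, which I can write as a sum of pure tensors $b'\otimes x'$ with $b'\in A$ and $x'\in A^{\otimes i}$. Applying $\lozenge_{\sigma,\lambda}$ then produces a sum of terms $m(a\otimes b')\underline{\otimes}(x'\Join_{\sigma,\lambda}y)$, and since both $x'$ and $y$ have positive tensor degree, their shuffle $x'\Join_{\sigma,\lambda}y$ lies in $\bigoplus_{k\geq 1}A^{\otimes k}$, so the whole expression sits in $A\underline{\otimes}\bigoplus_{k\geq 1}A^{\otimes k}$.

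The main obstacle is justifying this last claim, namely that $\Join_{\sigma,\lambda}$ maps $A^{\otimes i}\underline{\otimes}A^{\otimes j}$ with $i,j\geq 1$ into $\bigoplus_{k\geq 1}A^{\otimes k}$, i.e., produces no scalar component. The base case $a_1\Join_{\sigma,\lambda}b_1=a_1\otimes b_1+\sigma(a_1\otimes b_1)+\lambda\cdot m(a_1\otimes b_1)\in A\oplus A^{\otimes 2}$ makes this evident at degree $(1,1)$, and a direct induction on $i+j$ using the four recursive clauses in the definition of $\Join_\sigma$ confirms that none of the summand types on the right-hand side can reduce the tensor degree of the output to zero whenever both inputs have positive tensor degree. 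With both summands thereby confirmed as subalgebras, the preceding Proposition immediately yields the conclusion for both $P_1$ and $P_2$.
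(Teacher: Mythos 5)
Your proposal is correct and follows essentially the same route as the paper: the same vector-space decomposition $\mathcal{R}_{\sigma,\lambda}(A)=(A\underline{\otimes}\mathbb{K})\oplus\bigl(\bigoplus_{i\geq 1}A\underline{\otimes}A^{\otimes i}\bigr)$, followed by an appeal to the preceding Proposition on projections onto direct summands that are subalgebras. The only difference is one of detail: where the paper compresses the subalgebra verification into the remark that $\beta_{01}$ is the identity and $T^{qsh}_{\sigma}(A)$ is filtered, you spell out the computation for $A\underline{\otimes}\mathbb{K}$ and give the degree-positivity induction for $\Join_{\sigma,\lambda}$, which is a legitimate (indeed more careful) justification of the same step.
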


Now we turn to tridendriform algebras which were introduced by
Loday and Ronco (\cite{LR}).

\begin{definition}Let $V$ be a vector space, and $\prec$, $\succ$ and $\cdot$ be three binary operations on $V$. The quadruple $(V,\prec, \succ, \cdot)$ is called a \emph{tridendriform algebra} if the following relations are
satisfied: for any $x,y,z\in V$,
\[\begin{split}(x\prec y)\prec z&=x\prec(y\ast z),\\[3pt]
(x\succ y)\prec z&=x\succ (y\prec z),\\[3pt]
(x\ast y)\succ z&=x\succ(y\succ z),\\[3pt]
(x\succ y)\cdot z&=x\succ(y \cdot z),\\[3pt]
(x\prec y)\cdot z&=x\cdot (y\succ z),\\[3pt]
(x\cdot y)\prec z&=x\cdot (y\prec z),\\[3pt]
(x\cdot y)\cdot z&=x\cdot (y \cdot z),\end{split}\]where $x\ast
y=x\prec y+x\succ y+ x\cdot y$.\end{definition}

\begin{theorem}Let $(A,m,\sigma)$ be a braided algebra. We define three operations $\cdot$, $\prec$ and $\succ$ on $T(A)$ recursively by: for any $a,b\in A$ and any $x\in A^{\otimes i}$, $x\in A^{\otimes i}$,
\[\begin{split}(a\otimes x)\cdot (b\otimes y)&=(m\otimes\Join_{\sigma (i,j)} )(\mathrm{id}_A\otimes
\beta_{i,1}\otimes \mathrm{id}_A^{\otimes j-1})(a\otimes x\otimes
b\otimes y),\\[3pt]
(a\otimes x)\prec (b\otimes y)&=a\otimes \big(x\Join_\sigma (b\otimes y)\big),\\[3pt]
(a\otimes x)\succ (b\otimes y)&=(\mathrm{id}_A\otimes
\Join_{\sigma (i+1,j)})(\beta_{i+1,1}\otimes
\mathrm{id}_A^{\otimes j})(a\otimes x\otimes b\otimes
y).\end{split}\]Then $(T(A), \prec,\succ,\cdot)$ is a
tridendriform algebra.
\end{theorem}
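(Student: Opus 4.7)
The plan is to recognize the three operations $\prec$, $\succ$, $\cdot$ as the tridendriform structure induced from the Rota-Baxter algebra of Theorem 2.4, via the classical principle (Ebrahimi-Fard; cf.\ \cite{Guo2}) that every Rota-Baxter algebra of weight $\lambda$ carries a tridendriform structure.

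First, by Remark 2.3(3), I would embed $(A, m, \sigma)$ into its unital extension $(\widetilde{A}, \widetilde{m}, \widetilde{\sigma})$. Since the formulas defining $\prec$, $\succ$, $\cdot$ involve only $m$ and $\sigma$ (through $\Join_\sigma$ and $\beta$), they restrict from $T(\widetilde{A})$ to $T(A)$, so it suffices to treat the unital case. Applying Theorem 2.4 with $\lambda = 1$ then gives the Rota-Baxter algebra $(\mathcal{R}_{\sigma, 1}(A), P)$, and the Ebrahimi-Fard construction equips $\mathcal{R}_{\sigma, 1}(A) = A \underline{\otimes} T(A)$ with operations
\[
u \prec v = u \lozenge_{\sigma, 1} P(v), \quad u \succ v = P(u) \lozenge_{\sigma, 1} v, \quad u \cdot v = u \lozenge_{\sigma, 1} v,
\]
which satisfy the seven tridendriform identities.

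Under the identification $A \underline{\otimes} A^{\otimes i} \leftrightarrow A^{\otimes i+1}$ given by $a \underline{\otimes} x \mapsto a \otimes x$, I would then verify that these three operations coincide with the $\prec$, $\succ$, $\cdot$ of the theorem. For $\prec$ this uses $\sigma(v \otimes 1_A) = 1_A \otimes v$ to obtain $\beta_{i, 1}(x \otimes 1_A) = 1_A \otimes x$, so that $u \lozenge_{\sigma, 1} P(v)$ reduces to $a \underline{\otimes} (x \Join_\sigma (b \otimes y))$. For $\succ$, the expansion of $P(u) \lozenge_{\sigma, 1} v$ together with $m(1_A \otimes -) = \mathrm{id}$ strips the leading $1_A$ to produce the right-hand side of the defining formula. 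For $\cdot$, the formula is immediate from the definition of $\lozenge_{\sigma, 1}$. These are exactly the three computations already performed in the proof of Theorem 2.4.

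The main obstacle is the bookkeeping above, particularly ensuring that the insertion and subsequent cancellation of $1_A$ is consistent with the identification between $\mathcal{R}_{\sigma, 1}(A)$ and $T^+(A)$, and that the restriction from $T(\widetilde{A})$ back to $T(A)$ does not introduce stray unital contributions. As an alternative avoiding the reduction to the unital case, one could verify each of the seven tridendriform identities directly by induction on $|x| + |y| + |z|$, using that $\prec + \succ + \cdot = \Join_\sigma$ on $T^+(A)$ (by unwinding the recursive definition) together with the associativity of $T^{qsh}_\sigma(A)$, the braided algebra compatibility axioms, and the braid relation $\beta_{i+j, 1} = (\beta_{i, 1} \otimes \mathrm{id}_A^{\otimes j})(\mathrm{id}_A^{\otimes i} \otimes \beta_{j, 1})$.
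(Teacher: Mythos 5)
Your proposal is correct, but it is not the route taken in the paper's own proof of this theorem: there, the result is established by direct verification of the tridendriform identities, using only the associativity of $\Join_\sigma$ and its compatibility with $\beta$ in the sense of braided algebras (the paper carries out the identity $(x\ast y)\succ z=x\succ(y\succ z)$ as a sample computation and leaves the rest as analogous checks). Your argument --- transferring the tridendriform structure from the Rota--Baxter algebra $(\mathcal{R}_{\widetilde{\sigma},1}(\widetilde{A}),P)$ of Theorem 2.4 via Ebrahimi-Fard's weight-one principle, after embedding $A$ into its unital extension $\widetilde{A}$ --- is precisely the alternative that the paper itself records in Remark 2.9, immediately after its proof, as ``an easier argument,'' so it is a legitimately different and sanctioned path. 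The two reductions you single out are indeed the only real content: (i) under the identification $a\underline{\otimes}x\mapsto a\otimes x$, the operations $u\lozenge_{\sigma,1}P(v)$, $P(u)\lozenge_{\sigma,1}v$, $u\lozenge_{\sigma,1}v$ become the three displayed formulas of the theorem --- these are literally the three computations inside the proof of Theorem 2.4 with the outer $P$ stripped, using $\beta(x\underline{\otimes}1_A)=1_A\underline{\otimes}x$ and $m(1_A\otimes -)=\mathrm{id}$, together with $\Join_{\sigma,1}=\Join_\sigma$; and (ii) $T^+(A)\subset T^+(\widetilde{A})$ is stable under the three operations and they restrict to the $(m,\sigma)$-formulas, which holds because $\widetilde{m}$ and $\widetilde{\sigma}$ agree with $m$ and $\sigma$ on $A\otimes A$ and take values in $A$ and $A\otimes A$ respectively, so the recursion defining $\Join_{\widetilde{\sigma}}$ never produces a component involving $1\in\mathbb{K}\subset\widetilde{A}$ when fed elements of $T^+(A)$. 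As for what each approach buys: the paper's direct check is self-contained, requires no unit and no appeal to the Rota--Baxter/tridendriform correspondence of \cite{Fa}, and displays the braided manipulations explicitly (at the cost of seven verifications of the kind it samples); yours is shorter and more conceptual, explaining \emph{why} the identities hold --- they are inherited from a Rota--Baxter structure --- at the cost of the unitalization and identification bookkeeping you correctly flag as the main obstacle.
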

\begin{proof}All the verifications are direct. We just need that $\Join_\sigma$ is associative and compatible with the braiding $\beta$ in the sense of braided algebras. For instance, we show the third condition. For any $a\in A$ and $x,y,z\in T(A)$,
\begin{eqnarray*}
\lefteqn{(x\Join_\sigma y)\succ (a\otimes
z)}\\[3pt]
&=&(\mathrm{id}_A\otimes \Join_{\sigma})(\beta_{?,1}\otimes
\mathrm{id}_{T(A)})(\Join_\sigma \otimes \mathrm{id}_A\otimes \mathrm{id}_{T(A)})(x \underline{\otimes} y\underline{\otimes} a\otimes z)\\[3pt]
&=&(\mathrm{id}_A\otimes \Join_{\sigma})(\beta_{?,1}(\Join_\sigma
\otimes \mathrm{id}_A)\otimes
\mathrm{id}_{T(A)})(x \underline{\otimes} y\underline{\otimes} a\otimes z)\\[3pt]
&=&(\mathrm{id}_A\otimes
\Join_{\sigma})(\mathrm{id}_A\otimes\Join_\sigma  \otimes
\mathrm{id}_{T(A)})\\[3pt]
&&\circ(\beta_{?,1}\otimes \mathrm{id}_{T(A)}\otimes\mathrm{id}_{T(A)})(\mathrm{id}_{T(A)}\otimes\beta_{?,1}\otimes \mathrm{id}_{T(A)})(x \underline{\otimes} y\underline{\otimes} a\otimes z)\\[3pt]
&=&(\mathrm{id}_A\otimes
\Join_{\sigma})(\mathrm{id}_A\otimes\mathrm{id}_{T(A)}
\otimes\Join_\sigma
)\\[3pt]
&&\circ(\beta_{?,1}\otimes \mathrm{id}_{T(A)}\otimes\mathrm{id}_{T(A)})(\mathrm{id}_{T(A)}\otimes\beta_{?,1}\otimes \mathrm{id}_{T(A)})(x \underline{\otimes} y\underline{\otimes} a\otimes z)\\[3pt]
&=&(\mathrm{id}_A\otimes \Join_{\sigma})(\beta_{?,1}\otimes \mathrm{id}_{T(A)})\\[3pt]
&&\circ(\mathrm{id}_{T(A)}\otimes\mathrm{id}_A\otimes\Join_\sigma )(\mathrm{id}_{T(A)}\otimes\beta_{?,1}\otimes \mathrm{id}_{T(A)})(x \underline{\otimes} y\underline{\otimes} a\otimes z)\\[3pt]
&=&x\succ\big(y\succ  (a\otimes z)\big).
\end{eqnarray*} Here we denote by $\beta_{?,1}$
the action of $\beta$ on $A^{\otimes ?}\underline{\otimes}A$ where
$?$ is a number determined by the corresponding
computation.\end{proof}

\begin{remark}Let $(R,\cdot,P)$ be a Rota-Baxter algebra of weight 1. Define $a\prec b=a\cdot P(b)$ and $a\succ b=P(a)\cdot b$. Then $(R,\prec,\succ,\cdot)$ is a tridendriform algebra (see \cite{Fa}). Using this fact, we can prove the above theorem by an easier argument: embed $A$ into the unital braided algebra $(\widetilde{A},\widetilde{m},1,\widetilde{\sigma})$, then the tridendriform algebra structure in Theorem 2.8 comes from the Rota-Baxter algebra $(\mathcal{R}_{\widetilde{\sigma},1}(\widetilde{A}),P)$ defined in Theorem 2.4.\end{remark}

\section{Braided Rota-Baxter algebras and their examples}

As we mentioned in the introduction, braided categories play an
important role in mathematical physics. So we would like to extend
the notion of Rota-Baxter algebras in braided categories.

\begin{definition}A triple $(R,P,\sigma)$ is called a \emph{braided Rota-Baxter algebra of weight $\lambda$} if $(R,\sigma)$ is a braided algebra and $P$ is an endomorphism of $R$ such that $(R,P)$ is a Rota-Baxter algebra of weight $\lambda$ and $\sigma(P\otimes P)=(P\otimes P)\sigma$.\end{definition}

\begin{example}Let $(A,m,1_A,\sigma)$ be a unital braided algebra and $(\mathcal{R}_{\sigma,\lambda}(A),P
)$ be the Rota-Baxter algebra defined before. Then
$(\mathcal{R}_{\sigma,\lambda}(A),P,\beta)$ is a braided Rota
baxter of weight $\lambda$.

Indeed, the only thing we need to verify is that $\beta(P \otimes
P )=(P \otimes P )\beta$. For any $a, b\in A$ and $x\in A^{\otimes
i}$,$ y\in A^{\otimes j}$, we have
\begin{eqnarray*}\lefteqn{\beta(P_A\otimes
P )\big((a\underline{\otimes}x)\underline{\otimes}(b\underline{\otimes}y)\big)}\\[3pt]
&=&\beta\Big(\big(1_A\underline{\otimes}(a\otimes x)\big)\underline{\otimes}\big(1_A\underline{\otimes}(b\otimes y)\big)\Big)\\[3pt]
&=&1_A\underline{\otimes}(\beta_{1,j+1}\otimes \mathrm{id}_A^{\otimes i+1})\Big(1_A\underline{\otimes}\beta_{i+1,j+1}\big((a\otimes x)\underline{\otimes}(b\otimes y)\big)\Big)\\[3pt]
&=&(P \otimes P
)\beta\big((a\underline{\otimes}x)\underline{\otimes}(b\underline{\otimes}y)\big).\end{eqnarray*}\end{example}

\begin{example}Let $(A,m,\sigma)$ be a braided algebra and $P_1$ be the projection defined in Theorem 2.6. Then $(\mathcal{R}_{\sigma,\lambda}(A),P_1,\beta)$ is a braided Rota baxter of weight -1.

Again, we only need to verify $\beta(P_1\otimes P_1)=(P_1\otimes
P_1)\beta$. For any $x_1,y_1\in A$ and $x_2,y_2\in
\bigoplus_{i\geq 1}A\underline{\otimes}A^{\otimes
i}$,\begin{eqnarray*}\lefteqn{(P_1\otimes P_1)\beta\big((x_1+x_2)\otimes(y_1+y_2)\big)}\\[3pt]
&=&(P_1\otimes P_1)\big(\beta(x_1\otimes y_1)+\beta(x_1\otimes y_2)+\beta(x_2\otimes y_1)+\beta(x_2\otimes y_2)\big)\\[3pt]
&=&\beta(x_1\otimes y_1)\\[3pt]
&=&\beta(P_1\otimes
P_1)\big((x_1+x_2)\otimes(y_1+y_2)\big),\end{eqnarray*}where the
second equality follows from the fact that $\beta(A^{\otimes
i}\underline{\otimes}A^{\otimes j})\subset A^{\otimes
j}\underline{\otimes}A^{\otimes i}$.

Similarly, $(\mathcal{R}_{\sigma,\lambda}(A),P_2,\beta)$ is also
such an algebra.
\end{example}

\begin{proposition}Let $(R,P)$ be a Rota-Baxter algebra of weight $\lambda$ and $\sigma$ be a braiding on $R$ such that $(R,\sigma)$ is a braided algebra and\[
\begin{split}
\sigma(P\otimes \mathrm{id})&=(\mathrm{id}\otimes P)\sigma,\\[3pt]
\sigma(\mathrm{id}\otimes P)&=(P\otimes \mathrm{id})\sigma.
\end{split}
\]
Then $(R,P,\sigma)$ is a braided Rota-Baxter algebra of weight
$\lambda$. Moreover, if we define $$x\star_P y=xP(y)+P(x)y+\lambda
xy,$$ for any $x,y \in R$, then $(R,\star_P,P,\sigma)$ is again a
braided Rota-Baxter algebra of weight $\lambda$.\end{proposition}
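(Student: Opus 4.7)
The proof splits into two claims. For the first, the hypotheses on $\sigma$ and $P$ cover everything except the single compatibility $\sigma(P\otimes P)=(P\otimes P)\sigma$ in the definition of a braided Rota-Baxter algebra. I would derive this by composing the two given identities once each:
\[
\sigma(P\otimes P)=\sigma(P\otimes \mathrm{id})(\mathrm{id}\otimes P)=(\mathrm{id}\otimes P)\sigma(\mathrm{id}\otimes P)=(\mathrm{id}\otimes P)(P\otimes \mathrm{id})\sigma=(P\otimes P)\sigma.
\]

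For the second claim, three things must be checked for $(R,\star_P,P,\sigma)$: that $(R,\star_P)$ is an associative algebra on which $P$ is a Rota-Baxter operator of weight $\lambda$; that $(R,\star_P,\sigma)$ is a braided algebra; and the compatibility $\sigma(P\otimes P)=(P\otimes P)\sigma$ (already at hand, as it does not involve the product). The first item is Atkinson's classical double-product construction for Rota-Baxter algebras: both the associativity of $\star_P$ and the Rota-Baxter identity $P(x)\star_P P(y)=P(x\star_P P(y))+P(P(x)\star_P y)+\lambda P(x\star_P y)$ are obtained by expanding each side and collapsing the triple products $P(x\cdot P(y))+P(P(x)\cdot y)+\lambda P(x\cdot y)$ to $P(x)P(y)$ via the original Rota-Baxter identity. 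I would either cite this standard fact or supply the one-line expansions.

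The genuinely new point is the braided algebra axiom for $\star_P$. Writing
\[
\star_P = m(\mathrm{id}\otimes P)+m(P\otimes \mathrm{id})+\lambda\, m,
\]
I would split both $\sigma(\star_P\otimes \mathrm{id})$ and $(\mathrm{id}\otimes \star_P)\sigma_1\sigma_2$ into the three corresponding summands. The $\lambda m$ summand is precisely the braided algebra condition already known for $(R,m,\sigma)$. For the other two, the hypotheses immediately lift to the three-fold tensor relations
\[
\sigma_2(\mathrm{id}\otimes P\otimes \mathrm{id})=(\mathrm{id}\otimes \mathrm{id}\otimes P)\sigma_2,\qquad \sigma_1(P\otimes \mathrm{id}\otimes \mathrm{id})=(\mathrm{id}\otimes P\otimes \mathrm{id})\sigma_1,
\]
so that each factor of $P$ can be slid past $\sigma_1\sigma_2$ into a position where the braided algebra condition for $m$ applies. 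The second axiom $(\star_P\otimes \mathrm{id})\sigma_2\sigma_1=\sigma(\mathrm{id}\otimes \star_P)$ is handled symmetrically, using the hypotheses in the opposite direction.

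The only possible obstacle is keeping the bookkeeping of commuting $P$ past $\sigma_1$ and $\sigma_2$ straight in the braided algebra verification; there is no new algebraic identity to discover, and once the decomposition of $\star_P$ is in place the argument is purely mechanical.
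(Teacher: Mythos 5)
Your proposal is correct and follows essentially the same route as the paper: the same one-line composition argument for $\sigma(P\otimes P)=(P\otimes P)\sigma$, the same appeal to the classical fact that $(R,\star_P,P)$ is a Rota--Baxter algebra, and the same verification of the braided-algebra axioms via the decomposition $\star_P=m(\mathrm{id}\otimes P)+m(P\otimes\mathrm{id})+\lambda m$, sliding $P$ past $\sigma_1\sigma_2$ with the lifted compatibility relations. No gaps; the bookkeeping you describe is exactly what the paper's displayed computation carries out.
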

\begin{proof}Note that\begin{eqnarray*}
\sigma(P\otimes P)&=&\sigma(P\otimes \mathrm{id})(\mathrm{id}\otimes P)\\[3pt]
&=&(\mathrm{id}\otimes P)\sigma(\mathrm{id}\otimes P)\\[3pt]
&=&(\mathrm{id}\otimes P)(P\otimes \mathrm{id})\sigma\\[3pt]
&=&(P\otimes P)\sigma.
\end{eqnarray*}
So $(R,P,\sigma)$ is a braided Rota-Baxter algebra of weight
$\lambda$.

It is well-known that $(R,\star_P,P)$ is a Rota-Baxter algebra of
weight $\lambda$. We denote by $m$ the multiplication of $R$. Then
we have
\begin{eqnarray*}\lefteqn{\sigma(\star_P\otimes \mathrm{id})}\\[3pt]
&=&\sigma\big((m\otimes \mathrm{id})( \mathrm{id}\otimes P\otimes \mathrm{id})+(m\otimes \mathrm{id})( P\otimes \mathrm{id}\otimes \mathrm{id})+\lambda m\otimes \mathrm{id}\big)\\[3pt]
&=&(\mathrm{id}\otimes m)\sigma_1\sigma_2( \mathrm{id}\otimes P\otimes \mathrm{id})+(\mathrm{id}\otimes m)\sigma_1\sigma_2( P\otimes \mathrm{id}\otimes \mathrm{id})\\[3pt]
&&+(\mathrm{id}\otimes \lambda m)\sigma_1\sigma_2\\[3pt]
&=&\big((\mathrm{id}\otimes m)( \mathrm{id}\otimes\mathrm{id} \otimes P)+(\mathrm{id}\otimes m)(\mathrm{id} \otimes P\otimes \mathrm{id})+(\mathrm{id}\otimes \lambda m)\big)\sigma_1\sigma_2\\[3pt]
&=&(\mathrm{id}\otimes\star_P )\sigma_1\sigma_2.\end{eqnarray*}

The another condition $\sigma(\mathrm{id}\otimes\star_P
)=(\star_P\otimes \mathrm{id})\sigma_2\sigma_1$ can be verified
similarly.\end{proof}

Given any triple $(R,P,\sigma)$ described as above, this
proposition provides another example of 2-braided algebras.

We provide more examples of braided Rota-Baxter algebras by using
quantum multi-brace algebras introduced in \cite{JR}.

\begin{definition}A \emph{quantum multi-brace algebra} $(V,M,\sigma)$ is a braided vector space $(V,\sigma)$ equipped with a family of operations $M=\{ M_{pq}\}_{p,q\geq 0}$, where $$M_{pq}:V^{\otimes p}\otimes V^{\otimes q}\rightarrow
V,\ \ p\geq 0,\ q\geq 0,$$satisfying

(i)\[
\begin{array}{lllll}
M_{00}&=&0,&& \\
M_{10}&=&\mathrm{id}_V&=&M_{01},\\
M_{n0}&=&0&=&M_{0n},\ \mathrm{for}\ n\geq 2,
\end{array}
\]

(ii) for any $i,j,k\geq 1$,
\[
\begin{split}
\beta_{1k}(M_{ij}\otimes \mathrm{id}_V^{\otimes k})&=(
\mathrm{id}_V^{\otimes k}\otimes M_{ij})\beta_{i+j,k} , \\[3pt]
\beta_{i1}(\mathrm{id}_V^{\otimes i}\otimes
M_{jk})&=(M_{jk}\otimes \mathrm{id}_V^{\otimes i} )\beta_{i,j+k},
\end{split}
\]

(iii) for any triple $(i,j,k)$ of positive integers,
\begin{eqnarray*}
\lefteqn{\sum_{r=1}^{i+j}M_{rk}\circ \big((M^{\otimes r}\circ
\bigtriangleup_\beta^{(r-1)})\otimes
\mathrm{id}_V^{\otimes k}\big)}\\
&=&\sum_{l=1}^{j+k}M_{il}\circ \big(\mathrm{id}_V^{\otimes
i}\otimes( M^{\otimes l}\circ \bigtriangleup_\beta^{(l-1)})\big).
\end{eqnarray*}
\end{definition}

Let $\delta$ be the deconcatenation coproduct on $T(V)$, i.e.,
$$\delta(v_1\otimes\cdots\otimes v_n)=\sum_{i=0}^n(v_1\otimes\cdots\otimes v_i)\underline{\otimes }(v_{i+1}\otimes\cdots\otimes
v_n),$$ and $\varepsilon$ be the projection from $T(V)$ onto
$\mathbb{K}$. We define
$\bigtriangleup_\beta=(\mathrm{id}_{T^c(V)}\otimes \beta\otimes
\mathrm{id}_{T^c(V)})\circ (\delta\otimes\delta)$, and
$$\ast=(\varepsilon\otimes \varepsilon)+\sum_{n\geq 1}M^{\otimes
n}\circ \bigtriangleup_\beta^{(n-1)}:T(V)\underline{\otimes}
T(V)\rightarrow T(V).$$

\begin{proposition}[\cite{JR}]Let $(V,M,\sigma)$ be a quantum multi-brace algebra. Then $(T(V), \ast,\beta)$ is a braided algebra. \end{proposition}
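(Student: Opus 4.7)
The plan is to verify two things separately: first that $(T(V), \ast)$ is associative, and second that $(T(V), \ast, \beta)$ satisfies the two compatibility conditions of a braided algebra.

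For associativity, I would fix homogeneous $x \in V^{\otimes a}$, $y \in V^{\otimes b}$, $z \in V^{\otimes c}$ and expand both $(x \ast y) \ast z$ and $x \ast (y \ast z)$ using the definition $\ast = (\varepsilon \otimes \varepsilon) + \sum_{n \geq 1} M^{\otimes n} \circ \bigtriangleup_\beta^{(n-1)}$. Since $\delta$ is coassociative and $\beta$ is built from genuine braid-group representations via the lifts $T_w^\sigma$, the iterated coproducts $\bigtriangleup_\beta^{(\cdot)}$ can be reshuffled so that both sides become sums of compositions of $M_{rk}$'s and $M_{il}$'s applied to $\beta$-twisted tensor factors of $x \otimes y \otimes z$. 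Axiom (iii) of a quantum multi-brace algebra is exactly the term-by-term identity needed to match the two sides.

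For the two braided algebra identities, I would exploit axiom (ii): each individual $M_{ij}$ already intertwines $\beta$ in the required way, namely $\beta_{1k}(M_{ij} \otimes \mathrm{id}_V^{\otimes k}) = (\mathrm{id}_V^{\otimes k} \otimes M_{ij}) \beta_{i+j,k}$ and its mirror. Extending this from a single $M_{ij}$ to the whole product $\ast$ requires combining it with the fact that $\delta$ and $\beta$ together make $\bigtriangleup_\beta$ equivariant under the appropriate $\beta$-twists, which in turn rests on $\beta$ satisfying the braid relations globally on $T(V) \otimes T(V)$. Assembling these ingredients should yield $(\mathrm{id}_{T(V)} \otimes \ast) \beta_1 \beta_2 = \beta (\ast \otimes \mathrm{id}_{T(V)})$ and $(\ast \otimes \mathrm{id}_{T(V)}) \beta_2 \beta_1 = \beta (\mathrm{id}_{T(V)} \otimes \ast)$.

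The main obstacle is the combinatorial bookkeeping in the associativity step: the iterated $\bigtriangleup_\beta^{(\cdot)}$'s spread the degrees of $x, y, z$ into many parallel copies, and matching the terms on the two sides requires a careful reindexing before axiom (iii) can be invoked at the outermost layer. A clean implementation is to induct on the total degree $a + b + c$, using axiom (ii) to push all internal $\beta$-twists to the boundary so that axiom (iii) applies directly at the top level. The braided-algebra identities, by contrast, should then follow almost mechanically from axiom (ii) once the framework for propagating $\beta$-braidings through $\bigtriangleup_\beta$ has been set up.
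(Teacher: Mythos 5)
The paper itself contains no proof of this proposition: it is imported from \cite{JR}, so your attempt can only be measured against the argument given there. Your overall division of labor is the right one---associativity should come from axiom (iii), the two compatibilities with $\beta$ from axiom (ii)---and your sketch of the braiding compatibilities (combine axiom (ii) for the individual $M_{pq}$ with the fact that deconcatenation $\delta$ intertwines $\beta$, which holds because the $\beta_{ij}$ are braid-group lifts) is essentially sound.

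The associativity step, however, has a genuine gap. Unwind axiom (iii) on $x\otimes y\otimes z\in V^{\otimes i}\otimes V^{\otimes j}\otimes V^{\otimes k}$: writing $(x\ast y)_r$ for the component of $x\ast y$ in $V^{\otimes r}$, the left-hand side is $\sum_r M_{rk}\big((x\ast y)_r\otimes z\big)$, which is precisely the component \emph{in $V$} of $(x\ast y)\ast z$, and the right-hand side is the component in $V$ of $x\ast(y\ast z)$. So axiom (iii) is \emph{not} ``exactly the term-by-term identity needed to match the two sides'': it matches only the degree-one components and says nothing directly about the components in $V^{\otimes n}$ for $n\geq 2$, which is where all the combinatorial difficulty sits. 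What closes this gap in \cite{JR} are two ingredients your sketch never isolates: (a) $\ast$ is a morphism of coalgebras, $\delta\circ\ast=(\ast\otimes\ast)\circ\bigtriangleup_\beta$ (this follows from coassociativity of $\bigtriangleup_\beta$, i.e.\ the compatibility of $\beta$ with $\delta$), and---using the braided-algebra identities obtained from axiom (ii)---so are the two triple products $\ast(\ast\otimes\mathrm{id}_{T(V)})$ and $\ast(\mathrm{id}_{T(V)}\otimes\ast)$; (b) conilpotent cofreeness of $T^c(V)$: a coalgebra morphism from a connected coalgebra into $T^c(V)$ is determined by its corestriction onto $V$. Given (a) and (b), associativity is \emph{equivalent} to axiom (iii), and the proof is one line. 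Note that this reverses your order of operations: the compatibility of $\ast$ with $\beta$ must be established \emph{before} associativity, since it is needed to make $\ast\otimes\mathrm{id}_{T(V)}$ and $\mathrm{id}_{T(V)}\otimes\ast$ coalgebra morphisms for the braided coproducts. Your proposed induction on total degree can be made to work, but only because it would implicitly reprove (a) and (b) (the $(1,n-1)$-component of $\delta$ is the identity on $V^{\otimes n}$, so the higher components of a coalgebra morphism are forced by the lower ones); as written, ``push all internal $\beta$-twists to the boundary so that axiom (iii) applies at the top level'' does not name this mechanism, and it is the heart of the proof.
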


From now on, we focus on the category of Yetter-Drinfeld modules.
Let $H$ be a Hopf algebra. The category $^H_H\mathcal{YD}$ of
Yetter-Drinfeld modules over $H$ consists of the following data:
objects in $^H_H\mathcal{YD}$ are vector spaces $V$ having
simultaneously a left $H$-module structure $\cdot$ and a left
$H$-comodule structure such that whenever $h\in H$ and $v\in V$,
$$\sum h_{(1)}v_{(-1)}\otimes h_{(2)}\cdot v_{(0)}=\sum
(h_{(1)}\cdot v)_{(-1)}h_{(2)}\otimes (h_{(1)}\cdot v)_{(0)},
$$ and morphisms in $^H_H\mathcal{YD}$ are linear maps which are
module and comodule homomorphisms. It is well-known that
$^H_H\mathcal{YD}$ is a braided tensor category. Let $V$ be an
object in $^H_H\mathcal{YD}$. Then $V$ is a braided vector space
with the natural braiding defined by $\sigma(v\otimes w)=\sum
v_{(-1)}\cdot w\otimes v_{(0)}$. An algebra in the category
$^H_H\mathcal{YD}$ is an object $A$ in $^H_H\mathcal{YD}$ together
with an associative multiplication $m$ which is a morphism in this
category. Clearly, in this case, $(A,m)$ is a braided algebra with
respect to the natural braiding.

\begin{proposition}Let $V$ be an object in $^H_H\mathcal{YD}$ and $\sigma$ be its natural braiding. If $M$ is a quantum multi-brace algebra structure on $(V,\sigma)$
such that all $M_{pq}$ are morphism in $^H_H\mathcal{YD}$, then
$(T(V), \ast)$ is an algebra in $^H_H\mathcal{YD}$.
\end{proposition}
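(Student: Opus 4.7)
The preceding proposition gives that $(T(V),\ast,\beta)$ is a braided algebra, so in particular $\ast$ is associative. It thus suffices to exhibit a Yetter--Drinfeld structure on $T(V)$ compatible with $\beta$, and to verify that $\ast$ is a morphism in $^H_H\mathcal{YD}$.

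For the first point, I would endow each $V^{\otimes n}$ with the standard tensor product action $h\cdot(v_1\otimes\cdots\otimes v_n)=\sum h_{(1)}\cdot v_1\otimes\cdots\otimes h_{(n)}\cdot v_n$ and the tensor product coaction $\rho(v_1\otimes\cdots\otimes v_n)=\sum v_{1,(-1)}\cdots v_{n,(-1)}\otimes(v_{1,(0)}\otimes\cdots\otimes v_{n,(0)})$. Since $^H_H\mathcal{YD}$ is a braided monoidal category closed under tensor products and direct sums, $T(V)=\bigoplus_{n\geq 0}V^{\otimes n}$ lies in $^H_H\mathcal{YD}$. By naturality of $\sigma$ as a categorical braiding, each $\beta_{ij}=T^\sigma_{\chi_{ij}}$, being a composition of tensor products of $\sigma$, is a morphism in $^H_H\mathcal{YD}$; moreover it coincides with the braiding on $V^{\otimes i}\otimes V^{\otimes j}$ induced by the above tensor product structures.

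For the second point, I would show that $\ast$ is assembled from morphisms in $^H_H\mathcal{YD}$. The operators $M^{\otimes n}$ are tensor products of the $M_{pq}$, which are morphisms by hypothesis. The counit $\varepsilon:T(V)\to\mathbb{K}$ is trivially a YD morphism, the scalars carrying the trivial structure concentrated in degree $0$. The only nontrivial check is that the deconcatenation $\delta:T(V)\to T(V)\underline{\otimes}T(V)$ is a YD morphism; both $H$-linearity and $H$-colinearity of $\delta$ reduce to coassociativity of the coproduct of $H$, since the iterated coproduct $h\mapsto h_{(1)}\otimes\cdots\otimes h_{(n)}$ agrees with first splitting $h=h_{(1)}\otimes h_{(2)}$ and then iterating on each factor, and the grouped coaction element $v_{1,(-1)}\cdots v_{n,(-1)}$ is insensitive to where the tensor is cut. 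Combining, $\bigtriangleup_\beta=(\mathrm{id}\otimes\beta\otimes\mathrm{id})\circ(\delta\otimes\delta)$ and all its iterates lie in $^H_H\mathcal{YD}$, so does every summand $M^{\otimes n}\circ\bigtriangleup_\beta^{(n-1)}$, and on a fixed $V^{\otimes i}\otimes V^{\otimes j}$ only finitely many such summands are nonzero (those with $n\leq i+j$), so the sum is well-defined and is a morphism in $^H_H\mathcal{YD}$.

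The only mildly delicate step is the Sweedler-index bookkeeping showing $\delta$ is a YD morphism; this is really an instance of the general fact that any cut of a tensor power respects the tensor product YD structure. Everything else is formal: a locally finite sum of compositions of morphisms in $^H_H\mathcal{YD}$ is again a morphism, and combined with the associativity already supplied by the previous proposition this proves that $(T(V),\ast)$ is an algebra in $^H_H\mathcal{YD}$.
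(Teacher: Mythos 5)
Your proposal is correct and follows essentially the same route as the paper: the paper's proof is just the remark that the $H$-module and $H$-comodule structures on $T(V)$ are diagonal and that all maps appearing in the formula for $\ast$ (namely $\delta$, $\beta$, $\varepsilon$ and the $M^{\otimes n}$) are morphisms in $^H_H\mathcal{YD}$, with associativity supplied by the preceding proposition, leaving the rest as "an easy verification." Your write-up simply carries out that verification explicitly (the YD structure on tensor powers, $\delta$ being a YD morphism via coassociativity, and local finiteness of the sum), which is exactly the content the paper leaves implicit.
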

\begin{proof}Since both of the $H$-module and $H$-comodule structures are diagonal, and all maps involving in the formula of $\ast$ are module and comodule homomorphisms, the result follows from an easy verification.\end{proof}

Consider the bosonization $T(V)\#H$ of $(T(V),\ast)$ by $H$. It is
an associative algebra with underlying vector space $T(V)\otimes
H$ and multiplication defined by $$(x\#h)(y\#h')=\sum
x\ast(h_{(1)}\cdot y)\# h_{(2)}h', x,y\in T(V), h,h'\in H.$$Here,
we use the symbol $x\#h$, instead of $x\otimes h$, to indicate the
new structure.

Furthermore, $T(V)\#H$ is a braided algebra with respect to the
natural braiding $\Sigma$ defined by
$$\Sigma\big((x\#h)\otimes(y\#h')\big)=\sum ((x_{(-3)}h_{(1)})\cdot y\#x_{(-2)}h_{(2)} h'S(x_{(-1)}h_{(3)}))\otimes (x_{(0)}\#h_{(4)}),
$$where $S$ is the antipole of $H$.

Observe that both of $H=\mathbb{K}\# H$ and $T^+(V)\#H$ are
subalgebras of $T(V)\#H$ and $T(V)\#H=H\oplus T^+(V)\#H$. By
Proposition 2.5, the projection $P_0$ from $T(V)\#H$ onto $H$ is
an idempotent Rota-Baxter operator of weight -1. In addition,
since $\Sigma\big((V^{\otimes p}\# H)\otimes (V^{\otimes q}\#
H)\big)\subset (V^{\otimes q}\# H)\otimes (V^{\otimes p}\# H)$, we
obtain that $\Sigma(P_0\otimes \mathrm{id})=(\mathrm{id}\otimes
P_0)\Sigma$ and $ \Sigma(\mathrm{id}\otimes P_0)=(P_0\otimes
\mathrm{id})\Sigma$. By summarizing the above discussion and
Proposition 3.4, we have

\begin{theorem}The triple $(T(V)\#H, P_0, \Sigma)$ is a braided Rota-Baxter algebra of weight -1. \end{theorem}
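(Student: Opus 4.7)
The plan is to assemble the three defining conditions of a braided Rota--Baxter algebra (Definition 3.1) from the structural facts collected just above the statement. Concretely, I would verify in turn that (a) $(T(V)\#H,\Sigma)$ is a braided algebra, (b) $(T(V)\#H,P_0)$ is a Rota--Baxter algebra of weight $-1$, and (c) $\Sigma$ commutes with $P_0\otimes P_0$. Part (a) is precisely the braided algebra structure on the bosonization described in the paragraph preceding the theorem, so no new work is required there. Part (b) will come from Proposition 2.5 via the decomposition $T(V)\#H=H\oplus(T^+(V)\#H)$, and part (c) will be deduced from the two compatibility relations $\Sigma(P_0\otimes\mathrm{id})=(\mathrm{id}\otimes P_0)\Sigma$ and $\Sigma(\mathrm{id}\otimes P_0)=(P_0\otimes\mathrm{id})\Sigma$ by invoking Proposition 3.4.

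For step (b) I need both summands to be subalgebras of $(T(V)\#H,\cdot)$. For $H=\mathbb{K}\#H$ this follows from the observation that $\ast$ restricted to $\mathbb{K}\underline{\otimes}\mathbb{K}$ reduces to $\varepsilon\otimes\varepsilon$: all the terms $M^{\otimes n}\circ\bigtriangleup_\beta^{(n-1)}$ vanish on scalars because $M_{00}=0$ and $M_{n0}=M_{0n}=0$ for $n\geq 2$. For $T^+(V)\#H$ I use that every $M_{pq}$ takes values in $V$, so each summand $M^{\otimes n}\circ\bigtriangleup_\beta^{(n-1)}$ with $n\geq 1$ lands in $V^{\otimes n}\subset T^+(V)$, and the $H$-action on $T^+(V)$ preserves $T^+(V)$ because it is diagonal. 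With both subalgebra properties in hand, Proposition 2.5 directly delivers that $P_0$ is an idempotent Rota--Baxter operator of weight $-1$.

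For step (c), the key input is the homogeneity property $\Sigma\bigl((V^{\otimes p}\#H)\otimes(V^{\otimes q}\#H)\bigr)\subset (V^{\otimes q}\#H)\otimes(V^{\otimes p}\#H)$ noted before the theorem. Since $P_0$ is the projection onto the $p=0$ layer along the $p\geq 1$ part, this homogeneity immediately implies $\Sigma(P_0\otimes\mathrm{id})=(\mathrm{id}\otimes P_0)\Sigma$ and $\Sigma(\mathrm{id}\otimes P_0)=(P_0\otimes\mathrm{id})\Sigma$. Proposition 3.4 then upgrades $(T(V)\#H,P_0)$ to a braided Rota--Baxter algebra of weight $-1$ with braiding $\Sigma$, which is exactly the statement.

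The main obstacle, such as it is, is the bookkeeping in step (b): checking that $T^+(V)\#H$ really is closed under the bosonized product $(x\#h)(y\#h')=\sum x\ast(h_{(1)}\cdot y)\#h_{(2)}h'$. This requires combining that $\ast$ preserves $T^+(V)$ in either argument and that the $H$-action preserves $T^+(V)$ (which follows from the grading being by tensor length and the action being diagonal by virtue of $V\in{}^H_H\mathcal{YD}$). Once that is verified, everything else is a direct appeal to the earlier propositions.
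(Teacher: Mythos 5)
Your proposal is correct and follows essentially the same route as the paper: the decomposition $T(V)\#H=H\oplus(T^+(V)\#H)$ into subalgebras together with Proposition 2.5 gives the Rota--Baxter structure, and the homogeneity $\Sigma\bigl((V^{\otimes p}\#H)\otimes(V^{\otimes q}\#H)\bigr)\subset(V^{\otimes q}\#H)\otimes(V^{\otimes p}\#H)$ yields the two intertwining relations, so Proposition 3.4 concludes. The only difference is that you spell out the subalgebra verifications (via $M_{00}=0$, the image of $M^{\otimes n}$ lying in $V^{\otimes n}$, and the diagonal $H$-action) which the paper simply asserts with ``Observe that.''
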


\begin{remark}The algebra $T(V)\#H$ is isomorphic, as a Hopf algebra, to the quantum multi-brace cotensor Hopf algebra introduced in \cite{FR}.\end{remark}

\section*{Acknowledgements}
The author is grateful to Marc Rosso for his constant
encouragement. He would like to thank Daniel Sternheimer for his
suggestion on a previous title of this paper. Finally, he would
like to thank the referees for their careful reading and useful
comments, especially, thank one of them for drawing his attention
to the Connes-Kreimer theory. This work was partially supported by
The National Natural Science Foundation of China (Grant No.
11201067).

\bibliographystyle{amsplain}

\end{document}